\begin{document}

\title{A  new nonmonotone adaptive trust region algorithm}


\author{Ahmad Kamandi\textsuperscript{a,*}, Keyvan Amini\textsuperscript{b} }


\institute{\textsuperscript{a} Department of Mathematics, University of Science and Technology of Mazandaran, P.O.Box 48518-78195, Behshahr, Iran. Email: ahmadkamandi@mazust.ac.ir.\\
\textsuperscript{b}Department of Mathematics,
	Faculty of Science, Razi University, P.O.Box 67141-15111
	Kermanshah, Iran. Email: kamini@razi.ac.ir}

\date{Received: date / Accepted: date}

\maketitle

\begin{abstract}

In this paper, we propose a new and efficient nonmonotone adaptive trust region algorithm to solve unconstrained optimization problems. This algorithm incorporates two novelties: it benefits from a radius dependent shrinkage parameter for adjusting the trust region radius that avoids undesirable directions and it exploits a new strategy to prevent sudden  increments of  objective function values in nonmonotone trust region techniques. Global convergence of this algorithm is investigated under  some mild conditions. Numerical experiments demonstrate the efficiency and robustness of the proposed algorithm in solving a collection of unconstrained optimization problems from the CUTEst package.

\keywords{Unconstrained optimization \and Nonmonotone trust region  \and Adaptive radius \and Global convergence \and CUTEst package.}
\end{abstract}

\section{Introduction}\label{sec1}
Consider the following unconstrained optimization problem:
\begin{equation}\label{e1} 
\begin{array}{ll}
\min          &~ f(x)\\
\mathrm{s.t.} &~ x\in\mathbb{R}^n,
\end{array}
\end{equation}
where $f:\mathbb{R}^n\rightarrow \mathbb{R}$ is a  differentiable function. We are interested in the case that the number of variables is large.
Despite  the fact that the well-known trust region method is  a well-documented  framework \cite{c1,n1}  in  numerical optimization for solving the Problem \eqref{e1}, its efficiency  needs to be improved. Since, itself or its variations are frequently required in tackling  emerged problems in extensive recent applications \cite{a3,c01,h1,s01}. 

In order to minimize $f(x)$, a trust region framework uses an approximation $x_k$ of a local minimizer to compute a trial step direction, $d_k$,  by solving the following subproblem:
\begin{equation}\label{e2} 
\begin{array}{ll}
\min          &~\displaystyle m_k(d)=f_k+g_k^T d+\frac{1}{2} d^TB_k d\\
\mathrm{s.t.} &~ \|d\|_2\leq \delta_k,
\end{array}
\end{equation}
where $f_k=f(x_k)$, $g_k=\nabla f(x_k)$, $\delta_k$ is a positive parameter that is called the trust region radius and $B_k$ is an approximation to the Hessian of the  objective function at $x_k$. In the rest of the paper $\|.\|$ denotes the Euclidean norm.

Finding a global minimizer of subproblem  \eqref{e2} is often too expensive such that, in practice, numerical methods are applied to find an approximation  \cite{g,m1,s3}.   Global  convergence of a classic trust region algorithm is proved provided that the  approximate solution $d_k$ of
subproblem \eqref{e2}  satisfies the following reduction estimation in the model function:
\begin{equation}\label{e3} 
 m_k(0)-m_k(d_k) \geq c \frac{1}{2} \|g_k\| \min \{ \delta_k , \frac{\|g_k\|}{\|B_k\|}\},
\end{equation}
with $c\in (0,1)$.

Given a fixed trial direction $d_k$, define the ratio $r_k$ as the following:
\begin{equation}\label{e4} r_k:=\frac{f_k-f(x_k+d_k)}{m_k(0)-m_k(d_k)}. \end{equation}

In  classical trust region methods, the $k$th iteration is called successful iteration if  $r_k>\mu$ for some $\mu\in(0,1)$. In this case, the trial point $x_k+d_k$ is accepted as a new approximation and the trust region radius is enlarged. Otherwise, the iteration $k$ is called an unsuccessful iteration; the trial point is rejected and the trust region is shrunk.
The efficiency of trust region methods strongly relies on the generated sequence of radii. A large radius possibly increases the cost of solving corresponding subproblem and a small radius increases the number of iterations. Hence, choosing an appropriate radius in each iteration is  challenging in trust region methods.  In an effort  to tackle this challenge, many authors have rigorously studied the adaptive trust region methods \cite{a1,e1,k1,s1,z1}. 

Zhang et al., in \cite{z1},   proposed the  following adaptive radius: 
\begin{equation}\label{e5} \delta_k=c^{p_k} \|g_k\|~\|\widehat{B}^{-1}_k\|, \end{equation}
where $c\in (0,1)$, $p_k$ is a nonnegative integer and
$\widehat{B}_k=B_k+E_k$ is a safely positive definite matrix based on a
modified Cholesky factorization from Schnabel and Eskow in \cite{s0}. Numerical results indicate that  embedding this adaptive radius in a pure trust region increases the efficiency. But, the formula \eqref{e5} needs to calculate the inverse matrix
$\widehat{B}^{-1}_k$ at each iteration such that it is not suitable  for large-scale problems.
 Shi and Guo, in \cite{s1}, proposed another adaptive radius  by
\begin{equation}\label{e6} \delta_k=-c^{p_k}\frac{g_k^Tq_k}{q_k^T\widetilde{B}_kq_k}\|q_k\|,\end{equation}
where $c\in (0,1)$, $p_k$ is a nonnegative integer and $q_k$ is a
vector satisfying
\begin{equation}\label{e7} -\frac{g_k^Tq_k}{\|g_k\|.\|q_k\|} \geq \tau,\end{equation}
with $\tau \in (0,1]$. Moreover, $\widetilde{B}_k$ is generated by the
procedure: $\widetilde{B}_k = B_k+iI$, where $i$ is the smallest
nonnegative integer such that $q_k^T\widetilde{B}_kq_k >0$.
  It is simple to see that the radius \eqref{e6}, for $p_k=0$, estimates the norm of exact minimizer of the quadratic model  $f_k+g_k^T d+\frac{1}{2} d^T\widetilde{B}_k d$ along the direction $q_k$. 
  
   Motivated by this adaptive radius, Kamandi et al.  proposed an efficient adaptive trust region method in which the radius at each iteration is determined by using the information gathered from the previous step \cite{k1}. Let $d_{k-1}$ be the solution of the subproblem in the previous step, for parameters $\tau \in (0,1)$ and $\gamma>1$ define: 
\begin{equation}\label{e8}
q_k:=\left\{
\begin{array}{ll}
-g_k &~~ \textrm{if}~ k=0 ~\textrm{or}~ 
\displaystyle \frac{-(g_k^T d_{k-1})}{\|g_k\| \|d_{k-1}\|}\leq \tau ,\\\\
d_{k-1} &~~ o.w,
\end{array}  
\right.
\end{equation}
and 
\begin{equation}\label{e9}
s_k:=\left\{
\begin{array}{ll}
\displaystyle -\frac{g_k^Tq_k}{q_k^TB_kq_k}~\|q_k\| & ~~ \mathrm{if}~ k=0,\\
\displaystyle \max \left\{-\frac{g_k^Tq_k}{q_k^TB_kq_k}~\|q_k\|,~ \gamma \delta_{k-1} \right\}& ~~ \mathrm{if}~ k \geq 1,
\end{array}  
\right.
\end{equation}
Then,  the proposed algorithm in \cite{k1} for solving \eqref{e1} is as follows: 

\rule{10cm}{.2pt}

\textbf{(IATR) Improved adaptive trust-region algorithm }

\rule{10cm}{.2pt}

\textbf{Input:} $x_0 \in \mathbb{R}^n$, a positive definite matrix  $B_0\in   \mathbb{R}^{n\times n}$,$\overline{\delta}>0$,

\hspace{10mm} $c,\mu\in (0,1)$, $\tau \in (0,1)$,   $\gamma\geq1$ and $\epsilon>0$.
  
\textbf{Begin}

\hspace{4mm} $k \leftarrow 0$, compute $f_0$ and $g_0$.

\hspace{4mm} \textbf{While} ($\|g_k\| > \epsilon$)

\hspace{10mm} Compute $q_k$ by (\ref{e8})  and $s_k$ by (\ref{e9}),

\hspace{10mm} Set $\delta_{k_0}=\min\{s_k,\overline{\delta}\}$,

\hspace{10mm} $r_k \leftarrow 0 $, $p\leftarrow 0 $,

\hspace{10mm} \textbf{While} ($r_k<\mu$)

\hspace{16mm} $\delta_{k_p} \leftarrow c^p \delta_{k_0}$,

\hspace{16mm} Compute $d_{k_p}$ by solving (\ref{e2}) with radius $\delta_{k_p} $ ;~ compute $r_k$ by (\ref{e4}),

\hspace{16mm} $p\leftarrow p+1$.

\hspace{10mm} \textbf{End While}

\hspace{10mm} $x_{k+1} \leftarrow x_k+d_{k_p}$,

\hspace{10mm} Update $B_k$ by a quasi-Newton  formula,

\hspace{10mm} $k \leftarrow k+1$.

\hspace{4mm} \textbf{End While}

\textbf{End}

\rule{10cm}{.2pt}

Despite it enjoys many advantages \cite{k1}, this algorithm has several disadvantages. First, setting a fixed value for the shrinkage parameter '$c$', in the inner  loop of the IATR algorithm, is not an intelligent choice. In order to see this, suppose  that the step direction $d_{k_0}$,  the solution of the subproblem \eqref{e2} with the radius $\delta_{k_0} $, is rejected by the ratio test.  In this case, the algorithm shrinks  the radius $\delta_{k_0}$ by the factor $c\in(0,1)$. Hence, we have the new following subproblem:
 
  \begin{equation}\label{e010}
 \begin{array}{ll}
\min~&m_k(d)=f_k+g_k^Td+\frac{1}{2}d^TB_kd \\
&\|d\|\leq c\delta_{k_0}.
\end{array}
\end{equation}
Since $c\delta_{k_0} <\delta_{k_0}$, it is clear that the feasible region of subproblem \eqref{e010} is a subset of the feasible region of subproblem \eqref{e2}. So, in case that $\|d_{k_0}\| \leq c\delta_{k_0}$, $d_{k_0}$ is also a solution of \eqref{e010}, although we know that it is rejected by the  ratio test. This means that the new step direction is rejected by the ratio test again without any improvement; solving the new subproblem has redundant computational costs though.

  Another drawback of a constant shrinkage parameter occurs   when  the trust region radius is too large and the shrinkage parameter is close to one: the algorithm is forced to solve the trust region subproblem several times until it finds a successful step. So, using a shrinkage parameter close to one may increase the number of function evaluations. On the other hand, using a small shrinkage parameter may cause to shrink the trust radius too fast; in this case, the number of iterations  increases. 

Furthermore,  the sequence of function evaluations generated by this algorithm is decreasing and numerical results show that imposing monotonicity to trust  region algorithms may reduce the speed of convergence for some problems, specially in the presence of narrow valley. In order to overcome similar drawbacks, Grippo et al. proposed a nonmonotone line search technique for Newton's method \cite{g1}. By generalizing the technique to the trust region methods, nonmonotone version of these methods appeared in the literature \cite{a1,a2,d0,p1,s2,z0}. 

 The basic difference between monotone and nonmonotone trust region approaches is due to the definition of the ratio $r_k$. In a nonmonotone trust region, the ratio is defined by
\begin{equation}\label{e10} \hat{r}_k=\frac{C_k-f(x_k+d_k)}{m_k(0)-m_k(d_k)}, \end{equation}
where $C_k$ is  a parameter greater than or equal to $f_k$. In this paper, we call $C_k$, the nonmonotone parameter. In different versions of nonmonotone algorithms,  the nonmonotone parameter is computed based on different methodologies. A common parameter for nonmonotone trust region methods is
\begin{equation}\label{e11}
 f_{l_k}:=\max _{0\leq j \leq  N_k}  \lbrace f_{k-j} \rbrace ,
 \end{equation} 
 where $N_0=0$ and $N_k=\min \{k ,N\} $ for a fixed integer number  $N\geqslant 0$. 
 
 Remark that by taking maximum in the parameter \eqref{e11},  a potentially very good function value can be excluded. Trying to tackle this drawback, 
Ahookhosh et al. in \cite{a2} proposed  the following nonmonotone parameter
\begin{equation}\label{e12}
 R_{k}=\eta_{k} f_{l_k}+(1-\eta_{k})f_k,
\end{equation}
where $ \eta_{k} \in  [\eta_{min},\eta_{max}],\eta_{min} \in [0,1) $ and $ \eta_{max} \in [\eta_{min},1] $. When  $\eta_k$ is close to one the effect of nonmonotonicity is amplified. On the other hand, when $\eta_k$ is close to zero the algorithm ignores the effect of the term $f_{l_k}$ and behaves monotonically. 

 In 2019,  Xue et. al. proposed a nonmonotone version of the IATR algorithm based on the nonmonotone parameter \eqref{e12} \cite{x1}. They also used a scaled memoryless BFGS  formula to update the approximation of the Hessian matrix.   By analyzing the numerical behavior of  nonmonotone versions of IATR using the aforementioned nonmonotone parameters, we find out that  in some problems, for example OSCIGRAD, the difference between  the current objective value $f_k$ and the nonmonotone parameter becomes too large and in this case a large increase is allowed to happen in the next iteration. Another drawback of above nonmonotone parameters is that they strongly depend on the choice of the memory parameter $N_k$ and the parameter $\eta_k$ and there is no specific rule to adjust them.

In this paper, by combining the idea of adaptive trust region and nonmonotone techniques, we propose a new efficient nonmonotone trust region algorithm for solving unconstrained optimization problems. In the new algorithm, a radius dependent shrinkage parameter is used to adjust the trust region radius in rejected steps that addresses the first disadvantage of IATR.  For resolving the second disadvantage, a novel strategy is used to compute the nonmonotone parameter in this algorithm which prevents a sudden increment in the objective values.

 The paper is organized as follows:  the new algorithm is proposed in the next section. Section 3 is devoted to its convergence properties. The numerical results of testing the new algorithm to solve a collection of the CUTEst test problems are reported in section 4. The last section includes the conclusion.

\section{The new algorithm}\label{sec2}

In this section, we propose our algorithm for solving unconstrained optimization problems.  

As mentioned in the previous section, setting a fixed value for the shrinkage parameter $c$ in the inner loop of the IATR algorithm may impose some useless computational costs to this algorithm. Therefore, for resolving this issue, we propose the following radius dependent shrinkage parameter
 \begin{equation}\label{e13}
  c_{k_p}:=c(\delta_{k_p} ),
 \end{equation}
 where $c(\delta):(0,\bar{\delta}]\rightarrow [\alpha_0,\alpha_1]$ is a decreasing function where $0<\alpha_0<\alpha_1<1$ and $\bar{\delta}$ is the maximum possible radius.  Also, in order to exclude the rejected trial step $d_{k_p}$, in the new algorithm we define the new radius as 
 $\delta_{k_{p+1}}=c_{k_p}\|d_{k_p}\|$.
 
Note that  the radius dependent parameter \eqref{e13} is close to $\alpha_0$ for a large trust region radius and is close to $\alpha_1$ for a small one. Hence, this parameter shrinks the trust region harshly  for large trust region radii and helps the new algorithm to find a successful step direction fast enough. Further, it shrinks the trust region mildly for the case that the trust region radius is small. 
 
Also numerical tests persuaded us to consider a radius dependent parameter $\gamma_k=\gamma(\delta_{k-1})$ based on the previous trust region radius and use it instead of constant parameter $\gamma$ in \eqref{e9}.  Similar to \eqref{e13} $\gamma(\delta_{k-1})$ is a decreasing function bounded from below by 1.
 
With the goal of overcoming the second disadvantage of the IATR algorithm and  building an efficient nonmonotone version of it, we propose a new nonmonotone parameter $C_k$. This new parameter benefits from  nonmonotonicity in an adaptive way  compared to the mentioned parameters.  When a very good function value is found at iteration $k$, it is better to save that by forcing  the algorithm to behave monotonically for the next iteration.  To this aim, we define the new nonmonotone parameter  using not only the simple parameter $f_{l_k}$  defined by \eqref{e11} but also considering its relative difference from the current function value. 

For a positive parameter $\nu$, define sequences  $\{M_k\}$  and $\{I_k\}$ as follows:
\begin{equation*}
M_k:=\left\{
\begin{array}{ll}
0 &~~ \textrm{if}~ k=0~ or~~f_{l_k}-f_k>\nu |f_k| \\
M_{k-1}+1 &~~ o.w,
\end{array}  
\right.
\end{equation*}
and
\begin{equation*}
I_k:=\left\{
\begin{array}{ll}
0 &~~ \textrm{if}~ k=0~ or~~f_k< f_{k-1} \\
I_{k-1}+1 &~~o.w.
\end{array}  
\right.
\end{equation*}
Having above sequences, for  fixed natural numbers  $\bar{N}$ and $\bar{I} $, we define the new nonmonotone parameter $C_k$ as
 \begin{equation}\label{e14}
C_k:=\left\{
\begin{array}{ll}
\max _{0\leq j \leq  n_k}  \lbrace f_{k-j} \rbrace  & \textrm{if}~ I_k\leq \bar{I} \\
f_k& o.w,
\end{array}  
\right.
 \end{equation} 
 where  $n_k=\min \{M_k ,\bar{N}\} $. Note that the sequence $\{I_k\}$ counts the number of consecutive increments in the objective function values. So, the nonmonotone parameter $C_k$ defined by \eqref{e14} prevents large increments in the objective function values and guarantees  at least one decrease for each $\bar{I}$th iteration.
Also, the definition of the sequence $\{M_k\}$ makes the new algorithm monotone when the relative difference between $f_{l_k}$ and the current function value is large and  prevents a sudden increment in  the objective function values for the next iteration.

Now, we are ready to propose the new adaptive nonmonotone trust region algorithm.

\rule{10cm}{.2pt}
 
\textbf{(NATR) Nonmonotone adaptive trust-region algorithm }

\rule{10cm}{.2pt}
\begin{itemize}
\item[] {\textbf{Input:} {$x_0 \in \mathbb{R}^n$, a positive definite matrix 
 $B_0\in   \mathbb{R}^{n\times n}$, $\overline{\delta}>0$, 
 
 a decreasing function $c(\delta)$,
  $\mu\in (0,1)$, $\tau \in (0,1)$, 
  $\gamma\geq1$ and $\epsilon>0$}.}
  
\item[] {\textbf{Begin}}

\item[] {\hspace{4mm} $k \leftarrow 0$; compute $f_0$ and $g_0$.}

\item[] {\hspace{4mm}\textbf{While} ($\|g_k\| > \epsilon$)}

\item[] { \hspace{10mm} Compute $q_k$ by \eqref{e8} and $s_k$ by \eqref{e9}, }

\item[] {\hspace{10mm} Set $\delta_{k_0}=\min\{s_k,\overline{\delta}\}$,}

\item[] {\hspace{10mm} Compute $C_k$ by \eqref{e14},}

\item[] {\hspace{10mm}   Compute  $d_{k_0}$ by solving  \eqref{e2} with radius $\delta_{k_0}$   and $\hat{r}_k$ by \eqref{e10} and set

\hspace{10mm} $p=0 $.}

\item[] {\hspace{10mm}\textbf{While} ($\hat{r}_k<\mu$)}

\item[] {\hspace{16mm}   Compute  $c_{k_p}$  by \eqref{e13},}

\item[] {\hspace{16mm} $\delta_{k_{p+1}}=c_{k_p}\|d_{k_p}\|$,}

\item[] {\hspace{16mm} Compute $d_{k_{p+1}}$ by solving \eqref{e2}  with radius $\delta_{k_{p+1}}$ and $\hat{r}_k$ by \eqref{e10},}

\item[] {\hspace{16mm} $p\leftarrow p+1$.}

\item[] {\hspace{10mm}\textbf{End While}}

\item[] {\hspace{10mm} $x_{k+1} \leftarrow x_k+d_{k_p}$,}

\item[] {\hspace{10mm} Update $B_k$ by a quasi-Newton formula,}

\item[] {\hspace{10mm} $k \leftarrow k+1$.}

\item[] {\hspace{4mm} \textbf{End While} }

\item[] {\textbf{End}}
\end{itemize}
\rule{10cm}{.2pt}

In the next section, we propose the convergence properties of the new algorithm.

\section{Convergence properties}\label{sec3}
In this section, we analyze the global convergence of the new algorithm. To
this end, we need the following assumptions:
\begin{description}
\item[\textbf{(H1)}] The objective function $f(x)$ is continuously
differentiable and has a lower bound on the level set 
\[
\mathcal{L}(x_0)=\{x\in \mathbb{R}^n ~|~ f(x)\leq f(x_0),\ x_0\in \mathbb{R}^n\}.
\]
\item[\textbf{(H2)}] The approximation matrix $B_k$ is
uniformly bounded, i.e., there exists a constant $M>0$ such that
\[
\|B_k\|\leq M,~~~ \mathrm{for~ all}~ k \in \mathbb{N}.
\]
\end{description}

The following lemma is  similar for both the IATR  and the NATR algorithms, so its proof is omitted.
\begin{lemma}\label{lem1}
Suppose that the sequence $\{x_k\}$ is generated by the NATR algorithm, then
\begin{equation*}
|f(x_k+d_{k_p})-m_k(d_{k_p}))|= o(\|d_{k_p}\|).
\end{equation*}
\end{lemma}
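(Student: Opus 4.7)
The plan is to write $f(x_k+d_k)-m_k(d_k)$ as the sum of a Taylor-type remainder of $f$ about $x_k$ and a quadratic model term involving $B_k$, then show each piece is $o(\|d_k\|)$ using assumptions (H1) and (H2).

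First I would appeal to the continuous differentiability of $f$ on $\mathcal{L}(x_0)$ (guaranteed by (H1)) and apply the integral form of the mean value theorem along the segment from $x_k$ to $x_k+d_k$, obtaining
\begin{equation*}
f(x_k+d_k) - f_k - g_k^T d_k \;=\; \int_0^1 \bigl(g(x_k+td_k) - g_k\bigr)^T d_k \, dt.
\end{equation*}
Subtracting the definition of the model $m_k(d_k) = f_k + g_k^T d_k + \tfrac{1}{2} d_k^T B_k d_k$ from $f(x_k+d_k)$ then gives
\begin{equation*}
f(x_k+d_k) - m_k(d_k) \;=\; \int_0^1 \bigl(g(x_k+td_k) - g_k\bigr)^T d_k \, dt \;-\; \tfrac{1}{2} d_k^T B_k d_k.
\end{equation*}

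Next I would bound each term. Applying Cauchy--Schwarz to the integral and using the uniform bound $\|B_k\|\le M$ from (H2) yields
\begin{equation*}
\bigl|f(x_k+d_k) - m_k(d_k)\bigr| \;\le\; \|d_k\| \max_{t\in[0,1]} \|g(x_k+td_k) - g_k\| \;+\; \tfrac{M}{2}\|d_k\|^2.
\end{equation*}
Dividing through by $\|d_k\|$ gives an upper bound of $\max_{t\in[0,1]} \|g(x_k+td_k) - g_k\| + \tfrac{M}{2}\|d_k\|$; the first summand tends to zero as $\|d_k\|\to 0$ by continuity of $g$ on the segment $[x_k, x_k+d_k]\subset\mathcal{L}(x_0)$, and the second is obviously $O(\|d_k\|)$. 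This delivers the claimed $o(\|d_k\|)$ estimate.

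There is no substantial obstacle here; the argument is the standard trust-region consistency estimate between the model and the objective, and it relies only on the $C^1$ regularity of $f$ from (H1) and the uniform boundedness of $B_k$ from (H2). The only subtlety worth highlighting in the write-up is the interpretation of $o(\|d_k\|)$: it is understood in the asymptotic sense applicable to those subsequences of iterations for which $\|d_k\|\to 0$, which is exactly the regime in which the estimate will be invoked in the subsequent convergence analysis.
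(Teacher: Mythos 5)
Your argument is correct and is precisely the standard Taylor-remainder estimate that the paper delegates to the reference (the paper's ``proof'' is just a citation to Nocedal and Wright, where this same decomposition into the integral remainder of $g$ plus the $\tfrac12 d_k^T B_k d_k$ term appears). Your closing remark on the interpretation of $o(\|d_k\|)$ is apt, since the lemma is invoked with $k$ fixed and $\|d_k^p\|\to 0$ as $p\to\infty$, where continuity of $g$ at $x_k$ suffices.
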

\begin{proof}
See \cite{n1}. $\square$
\end{proof} 
 The next two lemmas guarantee the existence of  some lower bounds for the  trust region radius $\delta_{k_0}$ and the norm of the trial step $d_{k_0}$   at iteration $k$ generated by the NATR algorithm. 
  \begin{lemma}\label{lem2} Suppose that  $\delta_{k_0}=\min\{s_k, \bar{\delta} \}$ is  the trust region radius at iteration $k$ of the NATR algorithm such that $s_k$ is defined by  \eqref{e9}. Then,

\begin{equation}\label{e15}
\delta_{k_0}\geq \min\{\tau \frac{\|g_k\|}{\|B_k\|} , \bar{\delta}\}.
\end{equation}
\end{lemma}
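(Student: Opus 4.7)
The plan is to chase through the definitions of $\delta_k^0$, $s_k$, and $q_k$, and then apply the trust-region-style bound $q_k^T B_k q_k \le \|B_k\|\,\|q_k\|^2$ together with the angle condition that $q_k$ satisfies by construction.

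First I would unwind the outer definition: since $\delta_k^0=\min\{s_k,\bar\delta\}$, the claim $\delta_k^0\ge \min\{\tau\|g_k\|/\|B_k\|,\bar\delta\}$ reduces to showing the single inequality $s_k\ge \tau\|g_k\|/\|B_k\|$. Looking at \eqref{e9}, in both the $k=0$ case and the $k\ge 1$ case one has $s_k\ge -\dfrac{g_k^T q_k}{q_k^T B_k q_k}\|q_k\|$ (in the second case because $s_k$ is defined as the max of this quantity and $\gamma\delta_{k-1}$). So it suffices to bound this quotient from below.

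Next I would verify that the angle condition $-g_k^T q_k \ge \tau\|g_k\|\,\|q_k\|$ holds for the $q_k$ produced by \eqref{e8}. This splits cleanly into two cases: if $q_k=-g_k$ (either $k=0$ or the cosine test fails), the cosine of the angle is exactly $1\ge\tau$; otherwise the selection rule in \eqref{e8} precisely forces the inequality with $\tau$. Combined with the Cauchy--Schwarz-type bound $q_k^T B_k q_k\le \|B_k\|\,\|q_k\|^2$ (which uses (H2) and the positive definiteness of $B_k$ maintained by the quasi-Newton update, to ensure the denominator is positive), we get
\begin{equation*}
-\frac{g_k^T q_k}{q_k^T B_k q_k}\|q_k\| \;\ge\; \frac{\tau\|g_k\|\,\|q_k\|}{\|B_k\|\,\|q_k\|^2}\|q_k\| \;=\; \frac{\tau\|g_k\|}{\|B_k\|}.
\end{equation*}
Chaining this with $s_k\ge -g_k^Tq_k\|q_k\|/(q_k^TB_kq_k)$ and then with $\delta_k^0=\min\{s_k,\bar\delta\}$ yields \eqref{e15}.

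There is no real obstacle here; the only subtlety worth a line in the write-up is justifying that $q_k^TB_kq_k>0$ so that the quotient in \eqref{e9} is well-defined and the inequality flips the right way. This follows from the positive definiteness of the quasi-Newton updates assumed in the algorithm, and once this is noted the argument is a two-line computation.
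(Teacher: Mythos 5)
Your proof is correct and follows essentially the same route as the paper's: both reduce to bounding $s_k$ via $s_k\geq -\frac{g_k^Tq_k}{q_k^TB_kq_k}\|q_k\|$, apply $q_k^TB_kq_k\leq\|B_k\|\,\|q_k\|^2$, and use the angle condition $-g_k^Tq_k\geq\tau\|g_k\|\,\|q_k\|$ guaranteed by the two cases of \eqref{e8}. The only (cosmetic) differences are that you dispatch the $\bar{\delta}$ term by monotonicity of the minimum rather than an explicit case split, and you rightly flag the positivity of $q_k^TB_kq_k$, which the paper leaves implicit.
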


\begin{proof} 
In case $\bar{\delta} \leq s_k $, we have $\delta_{k_0}=\bar{\delta}$ and the inequality \eqref{e15} is valid. Thus, consider the case that $ s_k< \bar{\delta}$  and $\delta_{k_0}= s_k$.
The definition of $s_k$ in \eqref{e9}, and the Cauchy-Schwarz inequality yield that
\begin{equation}\label{e16}
\delta_{k_0} \geq \frac{-g_k^Tq_k}{\|B_k\|\|q_k\|}.
\end{equation}
By the definition of $q_k$ in \eqref{e8} if $q_k=-g_k$,  the inequality \eqref{e16} results in
\begin{equation*}
\delta_{k_0} \geq \frac{\|g_k\|}{\|B_k\|}.
\end{equation*}
When $q_k=d_{k-1}$,  we  have $-g_k^Tq_k \geq \tau \|g_k\| \|q_k\| $ such that the inequality \eqref{e16} implies that
 \begin{equation*}
\delta_{k_0} \geq \tau \frac{\|g_k\|}{\|B_k\|}.
\end{equation*}
By the above explanation  along with the fact that $\tau \in (0,1)$ we can conclude that \eqref{e15} is valid. $\square$
 \end{proof} 
\begin{lemma}\label{lem3} Suppose that  $d_{k_0}$ is  the solution of subproblem \eqref{e2} with radius $\delta_{k_0}$. Then,
\begin{equation}\label{e17}
\|d_{k_0}\| \geq  \min \{ \frac{\|g_k\|}{\|B_k\|} , \delta_{k_0}\}.
\end{equation}
\end{lemma}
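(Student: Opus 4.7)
The plan is a straightforward case split on whether the trust region constraint is active at the computed step $d_k^0$. I would treat the boundary and interior cases separately, and in each case derive the required lower bound directly.

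If $d_k^0$ lies on the boundary of the trust region, then by definition $\|d_k^0\| = \delta_k^0$, and the inequality \eqref{e17} holds trivially since $\min\{\|g_k\|/\|B_k\|,\delta_k^0\}\leq \delta_k^0$. The substantive case is when $d_k^0$ lies strictly in the interior, $\|d_k^0\| < \delta_k^0$. Since $d_k^0$ is then an unconstrained stationary point of $m_k$, the first-order optimality condition gives
\begin{equation*}
\nabla m_k(d_k^0) = g_k + B_k d_k^0 = 0,
\end{equation*}
so that $B_k d_k^0 = -g_k$. Taking norms and using submultiplicativity of the induced operator norm together with assumption \textbf{(H2)}, we obtain
\begin{equation*}
\|g_k\| = \|B_k d_k^0\| \leq \|B_k\|\,\|d_k^0\|,
\end{equation*}
which rearranges to $\|d_k^0\| \geq \|g_k\|/\|B_k\|$. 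Combining both cases yields \eqref{e17}.

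The only delicate point is the passage from interiority to stationarity: this relies on $d_k^0$ being an actual minimizer of the subproblem \eqref{e2} (exact or at least a KKT point), which I would state explicitly at the beginning of the proof; given the standing assumption that $d_k^0$ solves \eqref{e2}, no real obstacle arises, and the argument is essentially a two-line calculation once the case distinction has been made.
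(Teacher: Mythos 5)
Your proof is correct and follows essentially the same route as the paper: a case split on whether the trust region constraint is active, with the interior case handled via the stationarity condition $B_k d_k^0 = -g_k$ (which the paper imports from Theorem 4.1 of Nocedal and Wright) and the bound $\|g_k\|\leq\|B_k\|\,\|d_k^0\|$. The only cosmetic difference is that you invoke submultiplicativity of the operator norm (a more accurate name than the paper's ``Cauchy--Schwarz'') and you cite (H2), which is not actually needed for this step.
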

\begin{proof}
 By Theorem 4.1 of \cite{n1}, when $d_{k_0}$ lies strictly inside the feasible region of subproblem \eqref{e2}, we must have $B_kd_{k_0}=-g_k$ such that the Cauchy-Schwarz inequality yields that
\begin{equation*}
\|d_{k_0}\| \geq \frac{\|g_k\|}{ \|B_k\|}.
\end{equation*}
In the other case $d_{k_0}$ lies on the boundary of the feasible region of subproblem \eqref{e2} which implies $\|d_{k_0}\|=\delta_{k_0}$.  So, from the above discussion we can conclude that \eqref{e17} is valid.$\square$

\end{proof} 
 By  \eqref{e15} and \eqref{e17}, we can also obtain a lower bound for $\delta_{k_p}$. Note that, at iteration $k$ of the NATR algorithm, for any $p \geq 1$, the solution $d_{k_p}$ lies on the boundary of the region defined by $\delta_{k_p}$. Since the objective is fixed for each iteration, when the trial step $d_{k_p}$ is rejected by the ratio test, the new radius $\delta_{k_{p+1}}$ is set to exclude  $d_{k_p}$ from the new region. Thus, by the contraction of the inner loop of the NATR algorithm, we have 
\begin{equation*}
\begin{split}
 \delta_{k_p}&= c_{k_{p-1}} \| d_{k_{p-1}}\| =c_{k_{p-1}}\delta_{k_{p-1}} \\
&=c_{k_{p-1}}c_{k_{p-2}}\| d_{k_{p-2}}\|=c_{k_{p-1}}c_{k_{p-2}} \delta_{k_{p-2}} \\
&\ \vdots \\
&=\prod_{i=0}^{p-1} c_{k_i}\|d_{k_0}\|.
\end{split}
\end{equation*}

This equation along with  \eqref{e15}, \eqref{e17} and the fact that $\alpha_0$  is a lower bound and $\alpha_1$ is an upper bound for $c_{k_i}$, for any $i \geq 0$, yield that 
\begin{equation}\label{e18}
 \alpha_0^p  \min \{ \tau \frac{\|g_k\|}{\|B_k\|} , \bar{\delta} \} \leq \delta_{k_p} \leq \alpha_1^p \bar{\delta}.
\end{equation}

In Lemma \ref{lem4}, we propose a lower bound for the denominator of the ratio  $\hat{r}_k$ defined by \eqref{e10} which is used  in Lemma \ref{lem5} to prove that  the inner loop of the NATR algorithm terminates in a finite number of inner iterations.
\begin{lemma}\label{lem4}
Suppose that (H2) holds, the sequence $\{x_k\}$ is
generated by the NATR algorithm, and $d_{k_p}$ is an approximate solution of the subproblem 
\eqref{e2} with radius $\delta_{k_p}$, that satisfies \eqref{e3}. Then, 
\begin{equation}\label{e19}
 m_k(0)-m_k(d_{k_p}) \geq \frac{1}{2} c  \alpha_{0}^p \|g_k\| \min \{ \tau \frac{\|g_k\|}{M} , \bar{\delta}\} ,~~~\forall k \in
\mathbb{N}.
\end{equation}
for all $k \in \mathbb{N}$.
\end{lemma}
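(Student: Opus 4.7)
The plan is to combine three ingredients already established in the excerpt: the classical Cauchy-type reduction estimate \eqref{e3}, the geometric lower bound \eqref{e18} on the radii $\delta_k^p$, and the uniform bound $\|B_k\|\le M$ from (H2). Since $d_k^p$ is an approximate minimizer of the subproblem with radius $\delta_k^p$, I would invoke \eqref{e3} as the starting point:
\begin{equation*}
m_k(0)-m_k(d_k^p)\;\ge\;\tfrac{1}{2}c\,\|g_k\|\,\min\!\Bigl\{\delta_k^p,\;\tfrac{\|g_k\|}{\|B_k\|}\Bigr\}.
\end{equation*}
The target inequality \eqref{e19} has the same prefactor $\tfrac{1}{2}c\|g_k\|$, so everything reduces to showing that the inner $\min$ above is bounded below by $\alpha_0^p\min\{\tau\|g_k\|/M,\bar\delta\}$.

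Next, I would handle the two arguments of the outer $\min$ separately. The term $\delta_k^p$ is taken care of directly by \eqref{e18}: using $\|B_k\|\le M$ monotonically inside the min gives
\begin{equation*}
\delta_k^p \;\ge\; \alpha_0^p\min\!\Bigl\{\tau\tfrac{\|g_k\|}{\|B_k\|},\bar\delta\Bigr\} \;\ge\; \alpha_0^p\min\!\Bigl\{\tau\tfrac{\|g_k\|}{M},\bar\delta\Bigr\}.
\end{equation*}
It remains to verify that $\|g_k\|/\|B_k\|$ also dominates the right-hand side of \eqref{e19}. This is the one place a small case analysis is needed. If $\tau\|g_k\|/M\le \bar\delta$, then $\alpha_0^p\tau\|g_k\|/M\le \|g_k\|/M\le \|g_k\|/\|B_k\|$ because $\alpha_0,\tau\in(0,1)$. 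If instead $\tau\|g_k\|/M>\bar\delta$, then $\|g_k\|/\|B_k\|\ge \|g_k\|/M>\bar\delta/\tau>\bar\delta\ge \alpha_0^p\bar\delta$. Either way, $\|g_k\|/\|B_k\|\ge \alpha_0^p\min\{\tau\|g_k\|/M,\bar\delta\}$.

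Putting the two bounds together, both arguments of the outer $\min$ dominate $\alpha_0^p\min\{\tau\|g_k\|/M,\bar\delta\}$, so
\begin{equation*}
\min\!\Bigl\{\delta_k^p,\tfrac{\|g_k\|}{\|B_k\|}\Bigr\}\;\ge\;\alpha_0^p\min\!\Bigl\{\tau\tfrac{\|g_k\|}{M},\bar\delta\Bigr\},
\end{equation*}
and inserting this into the reduction estimate yields \eqref{e19}. The only slightly non-mechanical step is the case split above, which is needed to absorb the $\bar\delta$ branch; once that is seen, the rest is essentially substitution, and I do not anticipate any real obstacle.
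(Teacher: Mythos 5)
Your proposal is correct and follows the same route as the paper: start from the Cauchy-type reduction \eqref{e3}, then use \eqref{e18} together with (H2) to bound the inner minimum from below. The only difference is that you spell out the case analysis showing that the $\|g_k\|/\|B_k\|$ branch also dominates $\alpha_0^p\min\{\tau\|g_k\|/M,\bar{\delta}\}$, a step the paper leaves implicit.
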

\begin{proof}
By \eqref{e3}, for $d_{k_p}$, we have
\begin{equation*}
 m_k(0)-m_k(d_{k_p}) \geq c \frac{1}{2} \|g_k\|   \min \{  \delta_{k_p} , \frac{\|g_k\|}{\|B_k\|}\}.
\end{equation*}
This inequality along with  assumption (H2) and the inequality \eqref{e18} result in 
\begin{equation*}
 m_k(0)-m_k(d_{k_p}) \geq \frac{1}{2} c  \alpha_{0}^p \|g_k\| \min \{ \tau \frac{\|g_k\|}{M} , \bar{\delta}\}.
\end{equation*}
So, the proof is completed. $\square$
\end{proof} 
\begin{lemma}\label{lem5}
Suppose that assumption (H2) holds, then the inner loop of the NATR algorithm is
well-defined.
\end{lemma}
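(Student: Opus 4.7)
The plan is to argue by contradiction: assume the inner loop never terminates, so that for every $p \geq 0$ the trial step $d_k^p$ generated from radius $\delta_k^p$ satisfies $\hat{r}_k < \mu$. I will extract enough smallness from the construction to force the classical ratio $r_k$ (and hence $\hat{r}_k$) to approach $1$, contradicting $\hat{r}_k < \mu < 1$.

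First I would use the fact that $C_k \geq f_k = m_k(0)$, which is built into both branches of \eqref{e14}: in the first branch $C_k$ is a maximum that includes $f_k$, and in the second branch $C_k = f_k$. Consequently
\begin{equation*}
\hat{r}_k \;=\; \frac{C_k - f(x_k+d_k^p)}{m_k(0)-m_k(d_k^p)} \;\geq\; \frac{f_k - f(x_k+d_k^p)}{m_k(0)-m_k(d_k^p)} \;=\; r_k,
\end{equation*}
so it suffices to show $r_k \to 1$ as $p \to \infty$. From the upper bound $\delta_k^p \leq \alpha_1^p \bar{\delta}$ in \eqref{e18} together with $\alpha_1 < 1$, I infer $\delta_k^p \to 0$ and therefore $\|d_k^p\| \leq \delta_k^p \to 0$ as $p\to\infty$. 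Since the outer loop is still active, $\|g_k\| > \varepsilon > 0$; hence for all $p$ large enough $\delta_k^p < \|g_k\|/M$, and the reduction estimate \eqref{e3} collapses to $m_k(0) - m_k(d_k^p) \geq \tfrac{c}{2}\|g_k\|\delta_k^p$.

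Then I would write $r_k - 1 = (m_k(d_k^p) - f(x_k+d_k^p))/(m_k(0)-m_k(d_k^p))$, apply Lemma~\ref{lem1} to bound the numerator by $o(\|d_k^p\|)$, and use the previous denominator bound to obtain $|r_k-1| \leq o(\|d_k^p\|)/(\tfrac{c}{2}\|g_k\|\delta_k^p)$. Cancelling the $\|d_k^p\|$ factor against $\delta_k^p$ via feasibility $\|d_k^p\|\leq\delta_k^p$ leaves a vanishing quantity, so $r_k\to 1$ and therefore $\hat{r}_k \geq r_k \geq \mu$ for all sufficiently large $p$, contradicting the assumption that the inner loop runs forever.

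The only real technical point is the mismatch between the $o(\|d_k^p\|)$ bound coming from Lemma~\ref{lem1} and the fact that the guaranteed model decrease is linear in $\delta_k^p$ rather than in $\|d_k^p\|$; the feasibility inequality $\|d_k^p\|\leq\delta_k^p$ is precisely what is needed to bridge the two and to make the resulting ratio into an $o(1)$ quantity divided by the positive constant $\tfrac{c}{2}\|g_k\|$.
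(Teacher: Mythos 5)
Your proof is correct and follows the same overall strategy as the paper's: argue by contradiction, use Lemma~\ref{lem1} to bound $|r_k-1|$ by $o(\|d_k^p\|)$ divided by the guaranteed model decrease, drive $\delta_k^p\to 0$ via the upper bound in \eqref{e18}, and finish with $\hat r_k\ge r_k$ from $C_k\ge f_k$. The one place you genuinely diverge is the lower bound on the denominator, and your choice is the stronger one. The paper plugs in Lemma~\ref{lem4}, so its denominator is $\tfrac12 c\,\alpha_0^p\varepsilon\min\{\tau\varepsilon/M,\bar\delta\}$; since the numerator is only controlled through $\|d_k^p\|\le\delta_k^p\le\alpha_1^p\bar\delta$ with $\alpha_1>\alpha_0$, the quotient carries an unaccounted factor of order $(\alpha_1/\alpha_0)^p$, and the paper's assertion that the right-hand side tends to zero is not immediate as written. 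You instead keep the raw estimate \eqref{e3}, observe that for large $p$ the minimum is attained at $\delta_k^p$, and cancel $o(\|d_k^p\|)$ against $\tfrac{c}{2}\|g_k\|\delta_k^p\ge\tfrac{c}{2}\|g_k\|\|d_k^p\|$ using feasibility, which leaves a quantity that genuinely vanishes as $p\to\infty$. Your version buys a clean, airtight limit argument; the paper's version buys uniformity in $k$ (useful later in the convergence theorem) but, for this lemma, at the cost of a step that needs exactly the repair you supplied. No gaps in your proposal.
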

\begin{proof}
By contradiction, assume that the inner loop of the NATR algorithm at iteration $k$ is not well-defined. Since $x_k$ is not the optimum, $\|g_k\|
\geq \epsilon$. 

Now, let $d_{k_p}$ be the solution of subproblem (\ref{e2})
corresponding to $p\in \mathbb{N}\cup\{0\}$ at $x_k$. It follows from Lemma \ref{lem1} and (\ref{e19}) that
\begin{equation*}\begin{split}
\left|\frac{f(x_k)-f(x_k+d_{k_p})}{m_k(0)-m_k(d_{k_p})}-1\right|
&=\left|\frac{f(x_k)-f(x_k+d_{k_p})-(m_k(0)-m_k(d_{k_p}))}{m_k(0)-m_k(d_{k_p})}\right|\\
& \leq\frac{o(\|d_{k_p}\|)}{\frac{1}{2} c  \alpha_{0}^p \|g_k\| \min \{ \tau \frac{\|g_k\|}{M} , \bar{\delta}\}  }
\\
& \leq \frac{o(\|d_{k_p}\|)}{\frac{1}{2} c  \alpha_{0}^p\epsilon  \min \{ \tau \frac{\epsilon}{M} , \bar{\delta}\}}.
\end{split}\end{equation*}
By \eqref{e18}, we have $\delta_{k_p} \leq \alpha_1^p \bar{\delta}$. So, if  the inner loop of the NATR algorithm cycles infinitely many times (or $p \rightarrow \infty$), then $\delta_{k_p}$ tends to zero. Thus, the feasibility of $d_{k_p}$, $\|d_{k_p}\|\leq\delta_{k_p}$,  implies that the right hand side of the above equation tends to zero. This means that for sufficiently large $p$, we get
\begin{equation*}
\frac{f(x_k)-f(x_k+d_{k_p})}{m_k(0)-m_k(d_{k_p})} \geq \mu,
\end{equation*}
This inequality along with the fact that $C_k \geq f_k$ yield that
\begin{equation*} \hat{r}_k=\frac{C_k-f(x_k+d_{k_p})}{m_k(0)-m_k(d_{k_p})} \geq \mu,
\end{equation*}
which means that the inner cycle of the NATR algorithm is terminated in 
the finite number of internal iterations. $\square$
\end{proof} 
Two following lemma illustrates some properties of the sequences $\{x_k\}$ and $\{C_k\}$, generated by the  NATR algorithm. The result of this lemma is used to prove the global convergence of the NATR algorithm.
\begin{lemma}\label{lem6}
Suppose that  assumption (H1) holds and the sequence $\{x_k\}$ is
generated by NATR algorithm. Then, $f_{k+1}\leq C_{k+1} \leq  C_k$ . Therefore,  the sequence  $\{x_k\}$ is contained in the level set $\mathcal{L}(x_0)$ and the sequence $\{C_k\}$ is convergent.
\end{lemma}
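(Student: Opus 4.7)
The lemma has two assertions packaged together, and I would prove them in order. The first inequality $f_{k+1}\le C_{k+1}$ is immediate from the definition \eqref{e14}: if $I_{k+1}>\bar I$ then $C_{k+1}=f_{k+1}$ directly, and if $I_{k+1}\le \bar I$ then $C_{k+1}$ is a maximum over a window that includes $j=0$, i.e.\ includes $f_{k+1}$ itself. So this part is just inspection.

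The real work is the monotonicity $C_{k+1}\le C_k$, which I would prove by induction on $k$. The essential input is that the inner while-loop of NATR terminates only when $\hat r_k\ge\mu>0$, giving
\begin{equation*}
C_k-f(x_k+d_k^p)\ \ge\ \mu\bigl(m_k(0)-m_k(d_k^p)\bigr)\ >\ 0,
\end{equation*}
so $f_{k+1}\le C_k$. Then I split on $I_{k+1}$. If $I_{k+1}>\bar I$ we have $C_{k+1}=f_{k+1}\le C_k$ and we are done. If $I_{k+1}\le\bar I$, then $C_{k+1}=\max_{0\le j\le n_{k+1}}f_{k+1-j}$, and I would use the structural inequality $n_{k+1}\le n_k+1$ (which follows because $M_{k+1}$ is either reset to $0$ or equals $M_k+1$, and $n_\cdot=\min\{M_\cdot,\bar N\}$) to rewrite
\begin{equation*}
C_{k+1}=\max\Bigl\{f_{k+1},\ \max_{0\le i\le n_{k+1}-1}f_{k-i}\Bigr\}.
\end{equation*}
The first entry is $\le C_k$ by the ratio bound, and the second entry, being a max over a window ending at $k$ of length at most $n_k+1$, should be bounded by $C_k$ using the inductive characterization of $C_k$.

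Once $f_{k+1}\le C_{k+1}\le C_k$ is in hand, the corollaries follow quickly. Iterating the inequalities and using $C_0=f_0$ yields $f_k\le C_k\le C_0=f_0$, which shows $x_k\in\mathcal L(x_0)$. Then $\{C_k\}$ is monotone non-increasing and bounded below by $\inf_{\mathcal L(x_0)}f$, which is finite by (H1); so $\{C_k\}$ converges.

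The one place I expect trouble is the induction step in the case $I_k>\bar I$, where $C_k$ collapses to $f_k$ and the inductive hypothesis does not directly certify that $f_{k-i}\le C_k$ for the historical indices entering $C_{k+1}$. To close this subcase I would exploit that $I_k>\bar I$ forces a strict decrease $f_{k+1}<f_k$ at the next step (so $I_{k+1}=0$) and that during the run of $I_k$ consecutive non-decreases one has $f_{k-j}\le f_k$ for $0\le j\le I_k$; tracking this carefully against the bound on $n_{k+1}$ is the delicate bookkeeping that powers the argument.
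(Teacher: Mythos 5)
Your decomposition mirrors the paper's own argument: the ratio test gives $f_{k+1}<C_k$, and the window-containment estimate $n_{k+1}\le n_k+1$ gives $C_{k+1}\le\max\{f_{k+1},C_k\}=C_k$ whenever $I_k\le\bar I$, since in that case $C_k$ really is the full window maximum $\max_{0\le j\le n_k}f_{k-j}$. The subcase you flag as delicate --- $I_k>\bar I$, where $C_k$ collapses to $f_k$ --- is indeed the crux, and your instinct that the inductive hypothesis no longer certifies the historical terms is right. But the repair you sketch does not close it. You propose to use $f_{k-j}\le f_k$ for $0\le j\le I_k$, which is true, but the window defining $C_{k+1}$ has length $n_{k+1}=\min\{M_{k+1},\bar N\}$, and $M_{k+1}$ is driven by the relative-gap test $f_{l_k}-f_k\le\nu|f_k|$, which is completely decoupled from the counter $I_k$. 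Nothing prevents $n_{k+1}>I_k+1$, in which case the maximum reaches back past the run of non-decreases to an iterate strictly larger than $f_k=C_k$.

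Concretely, take $\nu=0.25$, $\bar I=3$, $\bar N=N=10$ and a run of accepted iterates with values $f_0=10$, $f_1=9.5$, $f_2=9.6$, $f_3=9.7$, $f_4=9.8$, $f_5=9.9$, $f_6=9.0$. Every step satisfies the acceptance requirement $f_{k+1}<C_k$, and $f_{l_k}-f_k$ never exceeds $\nu|f_k|$, so $M_k=k$ throughout. One checks $C_4=10$; then $I_5=4>\bar I$ forces $C_5=f_5=9.9$; but $I_6=0$ and $n_6=6$ give $C_6=\max\{f_0,\dots,f_6\}=10>C_5$. So the monotonicity $C_{k+1}\le C_k$ can genuinely fail, and your induction cannot be completed as stated. (The paper's own proof asserts $\max_{0\le j\le n_{k+1}}f_{k+1-j}\le\max\{f_{k+1},C_k\}$ without addressing this case, so the gap is inherited from the source rather than introduced by you; what does survive is the weaker chain $f_k\le C_k\le f_0$, which still yields the level-set containment, but not the claimed monotone convergence of $\{C_k\}$ without an extra hypothesis such as $\bar N\le\bar I$ or a reset of $M_k$ whenever $I_k$ exceeds $\bar I$.)
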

\begin{proof}
By  the NATR algorithm, at successful iteration $k$, we have
\begin{equation*}
 C_k-f_{k+1} \geq \mu (m_k(0)-m_k(d_{k_p})) \geq 0. 
\end{equation*}
This inequality  along with \eqref{e14} and the definition of the sequence $\{M_k\}$ imply that
\begin{equation*} 
C_{k+1}= \max _{0\leq j \leq  n_{k+1}}  \lbrace f_{k+1-j} \rbrace \leq \max \{f_{k+1} , C_k \} = C_k.
\end{equation*}
Thus,
 \begin{equation}\label{e20} 
f_{k+1} \leq  C_{k+1}  \leq C_k \leq C_0=f_0.
\end{equation}
The last equation means that $\{x_k\}$ is contained in the level set $\mathcal{L}(x_0)$. Accordingly, assumption (H1)  and \eqref{e20} yield that $\{C_k\}$ is  decreasing and bounded from below.  Therefore,  the sequence $\{C_k\}$ is convergent. $\square$
\end{proof} 

Now, we are ready to present the global convergence theorem.
\begin{theorem}
Suppose that assumptions (H1) and (H2) hold. Then, the NATR algorithm 
 either terminates in a finite number of steps, or generates
an infinite sequence $\{x_k\}$ such that
\begin{equation}\label{e21}\liminf_{k\rightarrow\infty}\|g_k\|=0.\end{equation}
\end{theorem}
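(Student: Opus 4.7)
The plan is to argue by contradiction. Suppose \eqref{e21} fails: there exist $\varepsilon>0$ and an index $K$ with $\|g_k\|\geq\varepsilon$ for all $k\geq K$. The goal is to force $C_k\to -\infty$, contradicting Lemma~\ref{lem6}.

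The first phase would establish a uniform positive lower bound on the actual reduction. From the acceptance test $\hat r_k\geq\mu$ and Lemma~\ref{lem4},
$$
C_k-f_{k+1}\geq\mu(m_k(0)-m_k(d_k^{p_k}))\geq\frac{\mu c}{2}\alpha_0^{p_k}\varepsilon\min\left\{\frac{\tau\varepsilon}{M},\bar\delta\right\}.
$$
The obstacle is the factor $\alpha_0^{p_k}$, so the key step is to show that the number $p_k$ of inner iterations at outer step $k$ is bounded by some constant $\bar p$ independent of $k$. This is the main technical hurdle. I would upgrade the argument of Lemma~\ref{lem5} to a quantitative one: since $\delta_k^{p}\leq \alpha_1^{p}\bar\delta$ and $\|g_k\|\geq\varepsilon$, for $\delta_k^p\leq \varepsilon/M$ the ratio $|f(x_k+d_k^p)-m_k(d_k^p)|/(m_k(0)-m_k(d_k^p))$ is dominated by $o(\delta_k^p)/(c\varepsilon\delta_k^p/2)\to 0$. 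Using $\{x_k\}\subset\mathcal L(x_0)$ (Lemma~\ref{lem6}) together with H1--H2, the $o(\cdot)$ estimate of Lemma~\ref{lem1} can be rendered uniform in $k$, yielding a universal $\bar p$ beyond which the ratio test must accept. Consequently $C_k-f_{k+1}\geq\gamma$ for some $\gamma>0$ and every $k\geq K$.

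The second phase would then exploit the sliding-window structure of $C_k$ to drain it to $-\infty$. From $f_{k+1}\leq C_k-\gamma$ and $C_k\leq C_K$ (Lemma~\ref{lem6}), one gets $f_j\leq C_K-\gamma$ for all $j\geq K+1$. Since $n_k\leq\bar N$, for every $k\geq K+\bar N+1$ all indices appearing in the definition of $C_k$ (whether $C_k=f_k$ in the $I_k>\bar I$ case, or $C_k=\max_{0\leq j\leq n_k}f_{k-j}$ otherwise) lie above $K$, so $C_k\leq C_K-\gamma$. A straightforward induction on this shift gives $C_k\leq C_K-m\gamma$ whenever $k\geq K+m(\bar N+1)$, forcing $C_k\to-\infty$. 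This contradicts the lower boundedness of $\{C_k\}$ (bounded below by $\inf_{\mathcal L(x_0)} f$ from H1, since $C_k\geq f_k$), completing the argument.
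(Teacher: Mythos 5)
Your overall architecture is sound and reaches the right conclusion, but it follows a genuinely different route from the paper in its second half, and it is more demanding in its first half. The paper also argues by contradiction and also invokes $\hat r_k\geq\mu$ together with Lemma~\ref{lem4}, but instead of draining $C_k$ to $-\infty$ it writes $C_k=f_{i_k}$, observes via Lemma~\ref{lem6} that $\{C_k\}$ converges (it is decreasing and bounded below), substitutes $k\mapsto i_k-1$ to obtain $C_{i_k-1}-C_k=f_{i_{i_k-1}}-f_{i_k}\geq\mu\,\frac{1}{2}c\,\alpha_0^{p}\varepsilon_0\min\{\tau\varepsilon_0/M,\bar\delta\}$, and lets $k\to\infty$ so that the left side tends to $0$ while the right side stays positive. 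Your sliding-window induction ($C_k\leq C_K-m\gamma$ for $k\geq K+m(\bar N+1)$) is the more classical nonmonotone argument and is equally valid; it buys a slightly more transparent contradiction (unboundedness below versus a vanishing telescoping difference), at the cost of having to track the window length $\bar N$ explicitly. Note that your induction implicitly assumes every iterate beyond $K$ is reached through the acceptance test, which is guaranteed by Lemma~\ref{lem5}, so that is fine.

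The one substantive point is the factor $\alpha_0^{p_k}$. You correctly identify that the argument needs the number of inner iterations $p_k$ to be bounded uniformly in $k$ (otherwise the per-step decrease $\gamma$ is not uniform), and you propose to obtain this by making the $o(\|d\|)$ estimate of Lemma~\ref{lem1} uniform over the level set. That uniformity does not follow from (H1)--(H2) as literally stated --- it requires something like a bounded level set or a uniformly continuous (e.g.\ Lipschitz) gradient on $\mathcal L(x_0)$ --- so as written this step is a promissory note rather than a proof. To your credit, the paper's own proof simply treats $\alpha_0^{p}$ as a constant without acknowledging that $p$ depends on the iteration, so you have isolated a real gap that the published argument shares; but your proposal does not actually close it, it only names the lemma upgrade that would. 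With that additional regularity assumption made explicit, both your proof and the paper's go through.
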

%
\begin{proof}
If the NATR algorithm  terminates in a finite  number of steps, then the proof is trivial.
Hence,  assume that the sequence $\{x_k\}$  generated by this algorithm is  infinite, we show that (\ref{e21}) holds. To this end, suppose that 
there exists a constant $\epsilon_0>0$  such that
\begin{equation}\label{e22}
\|g_k\| \geq \epsilon_0
\end{equation}
for all $k$. 
Let $C_k=f_{i_k}$,  where 
$f_{i_k}=\arg max \{\max _{0\leq j \leq  n_{k}}  \lbrace f_{k-j} \}\}$ . Then, by lemma \ref{lem6}, the sequence $\{f_{i_k}\}$ is a convergent subsequence of $\{f_k\}$. 
By the fact that $\hat{r}_k\geqslant\mu$,
we have 
\begin{equation*}
f_{i_k}-f_{k+1} \geq  \mu (m_k(0)-m_k(d_{k_p})).
\end{equation*}
Next, by replacing $k$ with $i_k-1$, we conclude that
\begin{equation*}
f_{i_{i_k-1}}-f_{i_k} \geq  \mu (m_{i_k-1}(0)-m_{i_k-1}(d_{(i_k-1)_p})).
\end{equation*}
This inequality  along with lemma \ref{lem4} yield that
\begin{equation*}
\begin{array}{ll}
f_{i_{i_k-1}}-f_{i_k} & \geq  \mu [\frac{1}{2} c  \alpha_{0}^p \| g_{i_k-1}\| \min \{ \tau \frac{\| g_{i_k-1}\|}{M} , \bar{\delta}\} ] \\
& \geq  \mu [ \frac{1}{2} c  \alpha_{0}^p \epsilon_0 \min \{ \tau \frac{\epsilon_0}{M} , \bar{\delta}\}].
\end{array}
\end{equation*}
 Taking limit from this inequality  when $k\rightarrow \infty$
implies that
\begin{equation*}
0\geq   \mu [\frac{1}{2} c  \alpha_{0}^p \epsilon_0 \min \{ \tau \frac{\epsilon_0}{M} , \bar{\delta}\}],
\end{equation*}
which is a contradiction. Thus, the equation \eqref{e21} is valid. $\square$
\end{proof}

\section{Numerical results}\label{sec4}
In this part of the paper, we report some numerical experiments that indicate the efficiency of the proposed algorithm. The results were obtained from  implementing  two versions of the NATR algorithm and  the adaptive nonmonotone algorithm proposed by Xue et. al. \cite{x1}  in MATLAB environment on a laptop (CPU Corei7-2.5 GHz, RAM 12 GB) and comparing their results of solving a collection of 228 unconstrained optimization test problems from the CUTEst collection \cite{g0}. The test problems and their dimensions are listed in table 1.

 In this  section, we use the following notations:
\begin{itemize}
  \item \textbf{AINTR}:  The adaptive nonmonotone algorithm proposed by Xue et. al. \cite{x1}.
   \item \textbf{NATR1}:  Nonmnotone adaptive trust region method (the NATR Algorithm) based on  the modified BFGS update formula used in \cite{k1}.
  \item \textbf{NATR2}:  Nonmnotone adaptive trust region method (the NATR Algorithm) based on the scaled memoryless BFGS update formula used in \cite{x1}.
 \end{itemize}

For the NATR Algorithm we used the following radius dependent parameters:
\begin{equation*}
\gamma(\delta)= \left\{
\begin{array}{ll}
1.5 &~~ \textrm{if}~\frac{\bar{\delta}}{2} <\delta \leq \bar{\delta} \\
1.9 &~~ \textrm{if}~\frac{\bar{\delta}}{5} <\delta \leq \frac{\bar{\delta}}{2}\\
2   &~~ \textrm{if}~\frac{\bar{\delta}}{10} <\delta \leq \frac{\bar{\delta}}{5}\\
3   &~~ \textrm{if}~ 10^{-6} <\delta \leq \frac{\bar{\delta}}{10}\\
3.5 &~~ \textrm{if}~ o.w\\
\end{array}  
\right.
\end{equation*}
and
\begin{equation*}
c(\delta)= \left\{
\begin{array}{ll}
0.3 &~~ \textrm{if}~\frac{\bar{\delta}}{10} <\delta \leq \bar{\delta} \\
0.45  &~~ \textrm{if}~ 10^{-6} <\delta \leq \frac{\bar{\delta}}{10}\\
0.6 &~~ \textrm{if}~ o.w\\
\end{array}  
\right.
\end{equation*}
similar to \cite{x1}, the other parameters are chosen as $\tau=0.01$, $N=15$, $\mu=0.07$, $\bar{\delta}=100$,  $\epsilon=10^{-6}\|g_0\|$ and for the NATR algorithm the remaining parameters are selected as $\bar{N}=10$, $\bar{I}=6$ and $\nu=10$. The trust region subproblems are solved by the Steihaug-Toint scheme \cite{s3}.

To visualize the whole behaviour of the algorithms, we use the performance profiles proposed by
Dolan and More \cite{d1}.The results of 14 test problems (the red ones in the table) are excluded from comparison because all the tested algorithms failed to solve them. So, the comparison of the algorithms is based on the remaining 214 test problems. Among these 214 test problems NATR1, NATR2 and AINTR faced with 9, 42, 49 failure(s) respectively. 

The total number of function evaluations, the total number of iterations  and the running time of each algorithm are considered  as performance indexes. Note that, at each iteration of the considered algorithms, the gradient of objective function is computed just one time, so the total number of iterations and the total number of the gradient evaluations are the same. 
 Figure 1 illustrates the performance profile of these algorithms, where the performance index is the total number of function evaluations. It can be seen that the NATR1 is the best solver with probability around 55\%, while the probability of solving a problem as the best solver is around 42\% and 30\% for NATR2 and AINTR respectively.

 The performance index in Figure 2 is the total number of iterations. From this figure, we observe that NATR1 obtains the most wins on approximately 58\% of all test problems and the probability of being the best solver is 41\% and 29\%  for NATR2 and AINTR respectively.  
 
  The performance profiles for the running times are illustrated in Figure 3. From this figure, it can be observed that NATR1 is the best algorithm. Another important factor of these three figures is  that the graph of NATR1 algorithm grows up faster than the others. 
 
 From the presented results, we can conclude that the radius dependent shrinkage parameter and the new nonmonotone procedure are effective to improve the efficiency of the IATR algorithm \cite{k1} compared with the nonmonotone algorithm proposed by  Xue \cite{x1}.

\renewcommand{\arraystretch}{1.1}
{\small
\begin{longtable}{llll}
\multicolumn{2}{l}%
{Table 1. List of test problems}\\[5pt]
\hline 
\multicolumn{1}{l}{Problem name} &\multicolumn{1}{c}{Dim}&
\multicolumn{1}{l}{Problem name} &\multicolumn{1}{c}{Dim}\\
\hline
\endfirsthead
\multicolumn{2}{l}%
{Table 1. (\textit{continued})}\\[5pt]
\hline
\endhead
\hline
\endfoot
\endlastfoot
 ARGLINA & 100, 200 & ARGLINB & 100, 200\\
ARGLINC & 100, 200  & BDQRTIC & 100, 500, 1000, 5000\\
BROWNAL & 100, 200, 1000 & BRYBND  & 100, 500\\
CHAINWOO & 100& CURLY10 & 100\\
CURLY20& 100& CURLY30& 100\\
EIGENALS & 110,\textcolor{red}{ 2550} & EIGENBLS &110, \textcolor{red}{2550}\\
EIGENCLS & 462, \textcolor{red}{2652} & EXTROSNB & 100,1000\\
FREUROTH & 100, 500, 1000, 5000&GENROSE & 100, 500\\
LIARWHD & 100, 500, 1000, 5000 & MANCINO &100\\
MODBEALE &200, 2000& MSQRTALS &100, 529 \\
MSQRTBLS  &100, 529 & NONDIA  & 100, 500, 1000, 5000\\
NONSCOMP & 100, 500, 1000, 5000& OSCIGRAD & 100, 1000\\
OSCIPATH &100, 500& PENALTY1&100, 500, 1000\\
PENALTY2&100, 200& SENSORS &100, 1000\\
SPMSRTLS & 100, 499, 1000, 4999& SROSENBR & 100, 500, 1000, 5000\\
SSBRYBND & 100 & TQUARTIC& 100, 500, 1000, 5000\\
VAREIGVL & 100, 500, 1000, 5000 &WOODS  &100, 1000, 4000\\
ARWHEAD  & 100, 500, 1000, 5000 &BOX    &100, 1000\\
BOXPOWER  &100, 1000& COSINE  &100, 1000\\
CRAGGLVY &100, 500, 1000, 5000&TESTQUAD &\textcolor{red}{1000, 5000}\\
DIXMAANA &300, 1500, 3000 &DIXMAANC &300, 1500, 3000\\
DIXMAAND &300, 1500, 3000 &DIXMAANE &300, 1500, 3000\\
DIXMAANF &300, 1500, 3000 &DIXMAANG &300, 1500, 3000\\
DIXMAANH &300, 1500, 3000 &DIXMAANI &300, 1500, 3000\\
DIXMAANJ &300, 1500, 3000 &DIXMAANK &300, 1500, 3000\\
DIXMAANL &300, 1500, 3000 &DIXMAANM &300, 1500, 3000\\
DIXMAANN &300, 1500, 3000 &DIXMAANO &300, 1500, 3000\\
DIXMAANP &300, 1500, 3000 &DQRTIC   &100, 500, 1000, 5000\\
EDENSCH &2000& ENGVAL1 &100,1000, 5000\\
FLETCBV2&100&FLETCHCR&100, \textcolor{red}{1000}\\
FMINSRF2 & 121, 961, 1024& FMINSURF& 121, 961, 1024\\
INDEFM  & 100, 1000, 5000& NCB20   & 110, 1010\\
NONCVXU2& 100, 1000, \textcolor{red}{5000} &NONCVXUN& 100, 1000, \textcolor{red}{5000}\\
NONDQUAR &  100, 500, 1000, 5000&PENALTY3&100\\
 POWELLSG & 100, 500, 1000, 5000& POWER   & 100, 500, 1000, 5000\\
 QUARTC   & 100, 500, 1000, 5000&SCHMVETT & 100, 500, 1000, 5000\\
NCB20B & 100,180,500,1000,2000   &SPARSINE & 100, 1000, 5000\\
SPARSQUR  & 100, 1000, 5000&TOINTGSS  & 100, 500, \textcolor{red}{1000, 5000}\\
VARDIM  & 100, 200 & DIXON3DQ &100, 1000\\
DQDRTIC & 100, 500, 1000, 5000&TRIDIA   & 100, 500, 1000, 5000\\
BROYDN7D & 100,500,1000,5000& SINQUAD & \textcolor{red}{100, 500, 1000, 5000}  \\
\end{longtable}

}

 \captionsetup{justification=centering,margin=2cm}

\begin{figure}[H]
\centerline{\includegraphics[width=9cm]{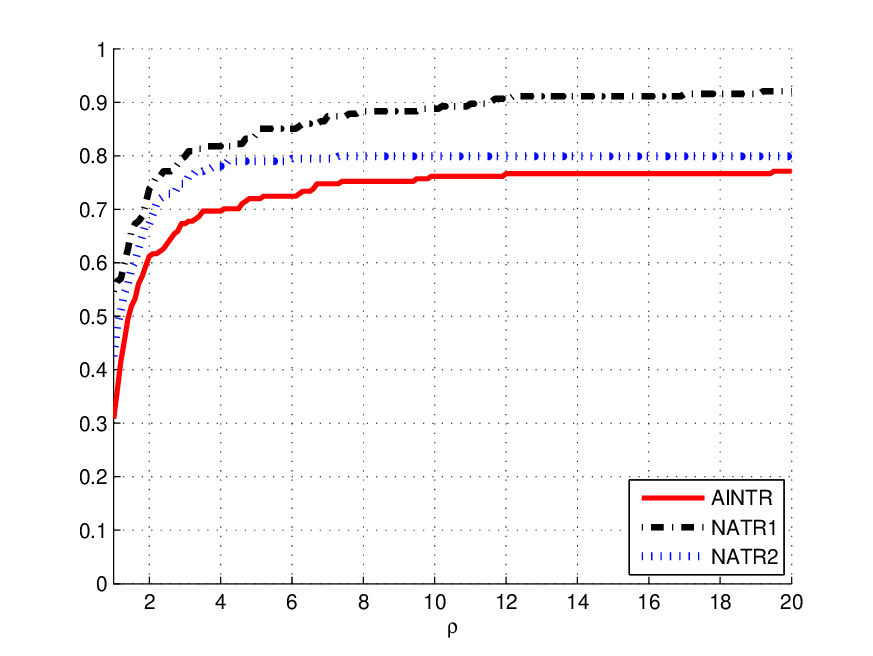}}
\caption{Performance profiles for the number of function evaluations}
\end{figure}

  \begin{figure} [H]
 \centering
   \includegraphics[width=9cm]{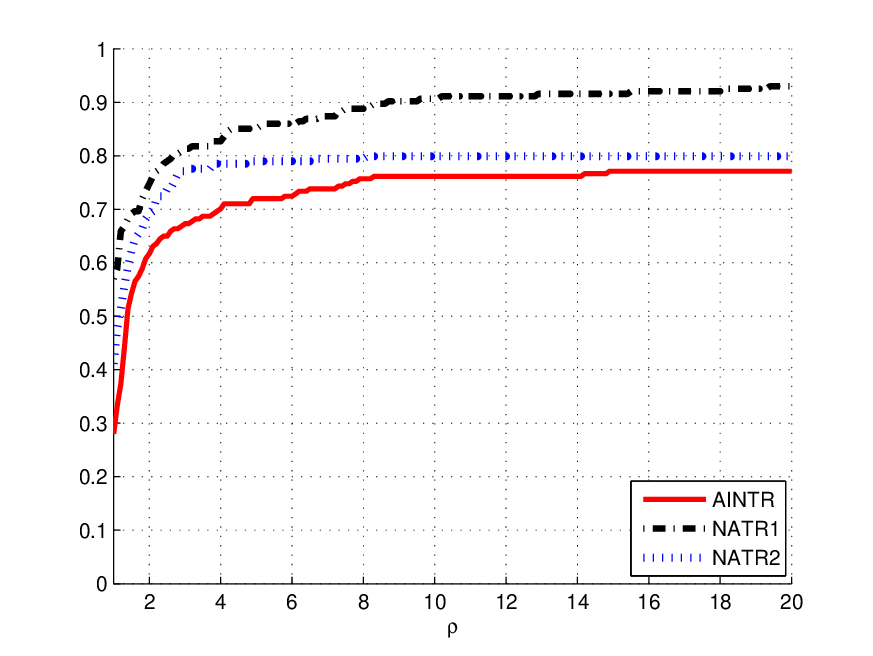}\\
 \caption{Performance profiles for the number of iterations}
\end{figure}

\begin{figure} [H]
\centering
   \includegraphics[width=9cm]{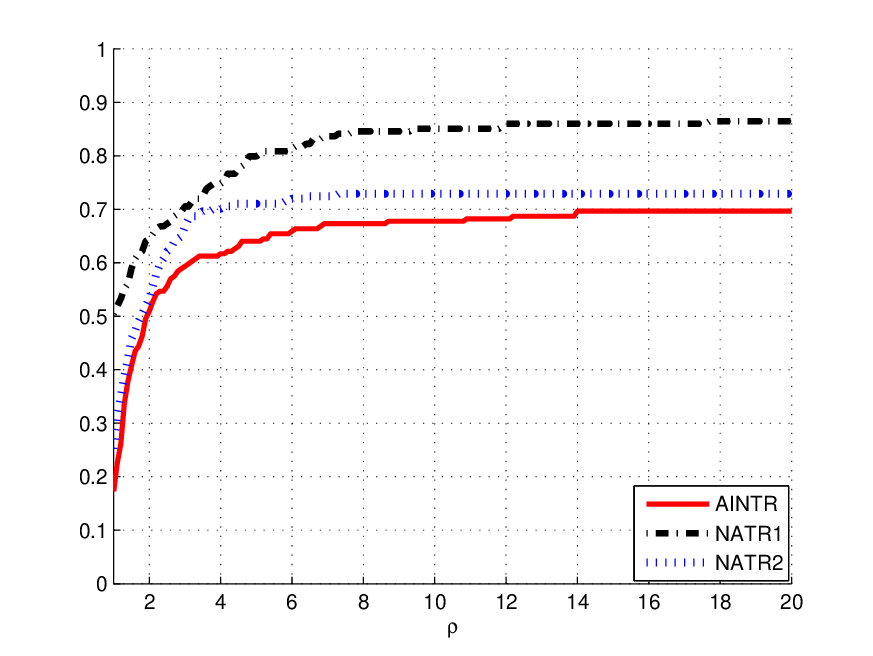}\\
\caption{Performance profiles for the running times}
\end{figure} 

\section{Conclusion}\label{sec5}
In this paper, we propose a new nonmonotone adaptive trust region algorithm to solve unconstrained optimization problems. The new algorithm incorporates a recently proposed adaptive trust region algorithm with nonmonotone techniques. We show that setting a constant shrinkage parameter for the adaptive trust region may impose unnecessary additional computational costs to the algorithm that affects its efficiency. Therefore,  we consider a radius dependent shrinkage parameter in the new algorithm. Further, we propose a new nonmonotone parameter that prevents sudden increments in the objective function values.

 The global convergence of the new algorithm is investigated under some mild conditions. Numerical experiments show the efficiency and robustness of the new algorithm in solving a collection of unconstrained optimization problems from the CUTEst package. It is concluded that exploiting the new ideas is effective to increase the efficiency of the nonmonotone adaptive trust region algorithms and these ideas also can be used  in other nonmonotone and adaptive trust region algorithms which suffer from similar drawbacks mentioned in this paper.
\newpage

\end{document}